\theoremstyle{definition}
\newtheorem{definition}{Definition}
\theoremstyle{remark}
\theoremstyle{plain}
\newtheorem{proposition}[definition]{Proposition}
\renewcommand*{\phi}{\varphi}
\renewcommand*{\theta}{\vartheta}
\renewcommand*{\rho}{\varrho}
\renewcommand*{\Gamma}{\varGamma}
\renewcommand*{\Delta}{\varDelta}
\renewcommand*{\Theta}{\varTheta}
\renewcommand*{\Lambda}{\varLambda}
\renewcommand*{\Xi}{\varXi}
\renewcommand*{\Pi}{\varPi}
\renewcommand*{\Sigma}{\varSigma}
\renewcommand*{\Upsilon}{\varUpsilon}
\renewcommand*{\Phi}{\varPhi}
\renewcommand*{\Psi}{\varPsi}
\renewcommand*{\Omega}{\varOmega}
\newcommand*{\RR}{\ensuremath{\mathbb{R}}}
\newcommand*{\NN}{\ensuremath{\mathbb{N}}}
\DeclareMathOperator{\GW}{GW}
\DeclareMathOperator{\GM}{GM}
\newcommand*{\T}{\mathrm{T}}
\title{On Assignment Problems
Related to 
\\
Gromov--Wasserstein Distances
on the Real Line}
\author{Robert Beinert\footnote{Institute of Mathematics,
    Technische Universit\"at Berlin,
    Stra\ss{}e des 17. Juni 136,
    10623 Berlin, Germany 
   (\mbox{beinert@math.tu-berlin.de},
    \mbox{steidl@math.tu-berlin.de}).},
  Cosmas Heiss\footnotemark[1],
  Gabriele Steidl\footnotemark[1]}
\begin{document}
\maketitle

\begin{abstract}
  Let  $x_1 < \ldots < x_n$ and $y_1 < \ldots < y_n$, $n \in \mathbb N$, be real numbers.
	We show by an example that the assignment problem
	$$
  \max_{\sigma \in S_n}
  F_\sigma(x,y)
  \coloneqq
  \frac12
  \sum_{i,k=1}^n
  |x_i- x_k|^\alpha \, |y_{\sigma(i)}- y_{\sigma(k)}|^\alpha, \quad \alpha >0,
$$
is in general neither solved by the identical permutation ($\text{id}$) nor the anti-identical permutation ($\text{a-id}$)
if $n > 2 +2^\alpha$. 
Indeed the above maximum can be, depending  on the number of points,
arbitrary far away from $F_\text{id}(x,y)$ and $F_\text{a-id}(x,y)$.
The motivation to deal with such assignment problems came from their relation to 
Gromov-Wasserstein divergences which have recently attained a lot of attention. 
\end{abstract}

%---------------------------------
\section{Introduction}
%---------------------------------
The Gromov--Wasserstein (GW) distance
as combination of the Gromov--Hausdorff and the Wasserstein distance
has been introduced by M\'emoli \cite{Mem2011}
in order to measure the distance between metric measure spaces.
This distance enjoys great popularity in the machine learning community
since it allow the comparison of probability measures
living on spaces of different dimensions.
Further,
the GW distance is invariant under isometries like shifts and rotations,
which is desirable in certain application.
Unfortunately,
the computation of the GW distance requires the minimization
of a non-convex quadratic program,
which is numerically challenging and time-consuming.
As a remedy  the so-called sliced GW distance \cite{VFC+2019,NDJBS2021}
has recently attracted much attention in the scientific community.
It has similar properties as the Gromov--Wasserstein distance
but appears to be superior with respect to the numerical implementation.
The central ingredients are the Gromov--Wasserstein distance on the real line and
it's rearrangement to an assignment problem.
Indeed, this important relation, which we will briefly explain below, was our motivation to deal with the topic.
Numerical studies raise the conjecture that the assignment problem could be solved by the
identical or anti-identical permutation.
The contribution of this small note is to show that this is in general not the case.

%----------------------------------------------------------
\section{Gromov--Wasserstein and Assignment Problems in $\mathbb R$} \label{sec:GW1}
%----------------------------------------------------------
Let $c \colon \RR \times \RR \to [0,\infty)$
be some symmetric cost function such that $c(t,t) = 0$ for $t \in \RR$,
and let $\mu \coloneqq \sum_{i=1}^n p_i \delta_{x_i}$ and
$\nu \coloneqq \sum_{j=1}^m q_i \delta_{y_j}$
be two discrete probability measures
with pairwise distinct real-valued $x_i$, resp. $y_i$, $i=1,\ldots,n$.
The non-negative weights $p \coloneqq (p_i)_{i=1}^n$ and $q \coloneqq (q_j)_{j=1}^m$
here satisfy $\bm 1^\T p = 1$ and $\bm 1^\T q = 1$.
The \emph{Gromov--Wasserstein distance} (on the line) is defined as
\begin{equation*}
  \GW(\mu, \nu)
  \coloneqq
  \min_{\pi \in \Pi(p,q)}
  \sum_{i,j=1}^{n,m}
  \sum_{k, \ell =1}^{n,m}
  | c(x_i, x_k) - c(y_j, y_\ell)|^2 \,
  \pi_{i,j} \pi_{k, \ell},
\end{equation*}
where $\Pi(p,q)$ denotes all matrices $\pi \coloneqq (\pi_{i,j})_{i,j=1}^{n,m}$
with $\pi \bm 1 = p$ and $\bm 1^\T \pi  = q^\T$.
The optimal GW plan $\pi$ describes
how much mass is transported from $x_i$ to $y_j$.
For $n=m$,
$p_i = q_j = 1/n$, and
$x_1 < \cdots < x_n$
as well as
$y_1 < \dots < y_n$,
we may instead look for an optimal (one-to-one) GW map
$\sigma \colon \{1,\dots,n\} \to \{1, \dots, n\}$
minimizing
\begin{equation*}
  \GM(\mu, \nu)
  \coloneqq
  \min_{\sigma \in S_n}
  \frac1{n^2}
  \sum_{i=1}^{n}
  \sum_{k=1}^{n}
  | c(x_i, x_k) - c(y_{\sigma(i)}, y_{\sigma(k)})|^2,
\end{equation*}
where $S_n$ denotes the permutation group of $\{1, \dots, n\}$.
Each optimal GW map $\sigma$ corresponds to a maybe non-optimal GW plan $\pi$
via $\pi_{i,j} = 1$ for $j = \sigma(i)$ and $\pi_{i,j} = 0$ otherwise.
The mass is thus completely moved from $x_i$ to $y_{\sigma(i)}$.
This problem is also known as the \emph{Gromov--Monge (GM)}.
Clearly, GW and GM are closely related.

Up to $1/n^2$,
the objective of GM may be rearranged as
\begin{align*}
  &\sum_{i,k=1}^{n}
  | c(x_i, x_k) - c(y_{\sigma(i)}, y_{\sigma(k)})|^2
  \\
  &\qquad=
    \sum_{i,k=1}^n
    \Bigl[
    c^2(x_i, x_k)
    - 2 c(x_i, x_k) \, c(y_{\sigma(i)}, y_{\sigma(k)})
    + c^2(y_{\sigma(i)}, y_{\sigma(k)})
    \Bigr]
  \\
  &\qquad=
    \sum_{i,k=1}^n
    c^2(x_i, x_k)
    - 2 \sum_{i,k=1}^n
    c(x_i, x_k) \, c(y_{\sigma(i)}, y_{\sigma(k)})
    + \sum_{i,k=1}^n
    c^2(y_{i}, y_{k}).
\end{align*}
Since the first and last sum are independent of $\sigma$,
finding a minimizer of GM is equivalent to
finding a maximizer of the \emph{Assignment Problem} 
\begin{equation}   \label{eq:quad-ass}
  \max_{\sigma \in S_n}
  F_\sigma(x,y)
  \qquad\text{with}\qquad
  \begin{aligned}[t]
  F_\sigma(x,y)
  &\coloneqq
  \frac12
  \sum_{i,k=1}^n
  c(x_i, x_k) \, c(y_{\sigma(i)}, y_{\sigma(k)})
  \\
  &=
  \sum_{\substack{i,k=1\\i<k}}^n
  c(x_i, x_k) \, c(y_{\sigma(i)}, y_{\sigma(k)})
\end{aligned}
\end{equation}
and $x = (x_i)_{i=1}^n$, $y = (y_j)_{j=1}^n$
with ascending ordered pairwise distinct components.
We are interested in the special
cost functions
$c(s,t) \coloneqq |s - t|^\alpha$
with $\alpha > 0$.
The case $\alpha = 1$
corresponding to the metric $d(t,s) \coloneqq |t-s|$
is of special interest
since it is related to the classic GW distance on the line.
Although numerical experiments may indicate that the maximizer of
the assignment problem \eqref{eq:quad-ass} is either the
identity $\mathrm{id}(i) \coloneqq i$
or by the anti-identity $\text{a-id}(i) \coloneqq n - i + 1$ on $\{1,\ldots,n\}$,
the following proposition shows that this is in general not the case.

\begin{proposition}
  \label{prop:1}
  Let $c(s,t) \coloneqq |s - t|^\alpha$, $\alpha > 0$,
  Then there exist $n > 2 + 2^\alpha$ and $x,y \in \RR^n$ with ascending ordered pairwise distinct components 
	such that
  \begin{equation*}
    F_{\rm{id}}(x,y)
    < \max_{\sigma \in S_n} F_\sigma(x,y)
    \qquad\text{and}\qquad
    F_{\text{\upshape a-id}}(x,y)
    < \max_{\sigma \in S_n} F_\sigma(x,y) .
		\end{equation*}
		Moreover, the gap can become arbitrary large for increasing $n \in \NN$.
 \end{proposition}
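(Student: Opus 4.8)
The plan is to exhibit one explicit family of configurations, indexed by a cluster size $m\in\NN$, for which a suitable non-monotone permutation strictly beats both $\mathrm{id}$ and $\text{a-id}$, with the gap growing linearly in $m$. Fix $\alpha>0$, let $m\ge 1$, put $n\coloneqq m+2$, and pick a small width $\veps>0$. I would place the $x$-points as a tight cluster together with two nearby outliers,
\begin{equation*}
  x \coloneqq (0,\veps,2\veps,\ldots,(m-1)\veps,\;1,\;1+\veps),
\end{equation*}
and the $y$-points as a tight cluster straddled by two outliers,
\begin{equation*}
  y \coloneqq (-1,\;0,\veps,\ldots,(m-1)\veps,\;1).
\end{equation*}
For $\veps$ small both tuples are ascending and pairwise distinct; the first $m$ entries of $x$ (resp.\ the middle $m$ entries of $y$) form a cluster near $0$, the two $x$-outliers sit near $1$, and the two $y$-outliers sit near $-1$ and $1$. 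The point of the design is that $x$ and $y$ have different ordinal structure (cluster-then-outliers versus outlier-then-cluster-then-outlier), which is exactly what forces $\mathrm{id}$ and $\text{a-id}$ to be wasteful.

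Next I would evaluate the three relevant values in the degenerate limit $\veps\to 0^+$, in which all within-cluster distances vanish and only the $x$-pairs between the cluster and the outliers survive, each surviving factor $|x_i-x_k|^\alpha$ or $|y_{\sigma(i)}-y_{\sigma(k)}|^\alpha$ tending to $1$ or $2^\alpha$. For the non-monotone permutation $\sigma$ that sends the two $x$-outliers to the two $y$-outliers ($1\mapsto -1$ and $1+\veps\mapsto 1$) and the cluster onto the cluster, each of the $2m$ cluster–outlier $x$-pairs is matched to a $y$-distance equal to $1$, so
\begin{equation*}
  \lim_{\veps\to 0^+} F_\sigma(x,y) = 2m .
\end{equation*}
By contrast both $\mathrm{id}$ and $\text{a-id}$ are forced to push one $x$-outlier into the cluster, wasting its $m$ cluster pairs; counting the remaining pairs gives
\begin{equation*}
  \lim_{\veps\to 0^+} F_{\mathrm{id}}(x,y)
  = \lim_{\veps\to 0^+} F_{\text{a-id}}(x,y)
  = m + 2^\alpha ,
\end{equation*}
where the single summand $2^\alpha$ comes from the lone cluster point that gets matched across the full width-$2$ gap of $y$. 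I would also record that $\sigma$ reverses the relative order of an outlier and the cluster at one end but preserves it at the other, hence $\sigma\neq\mathrm{id},\text{a-id}$; indeed $\sigma(m)>\sigma(m+1)$ rules out $\mathrm{id}$ and $\sigma(m+1)<\sigma(m+2)$ rules out $\text{a-id}$.

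Finally, since $F_\sigma$, $F_{\mathrm{id}}$ and $F_{\text{a-id}}$ depend continuously on the finitely many coordinates, for each integer $m>2^\alpha$ I can fix $\veps=\veps(m,\alpha)>0$ small enough that all three lie within $\tfrac14(m-2^\alpha)$ of their limits, whence
\begin{equation*}
  \max_{\sigma'\in S_n} F_{\sigma'}(x,y)
  \;\ge\; F_\sigma(x,y)
  \;>\; 2m-\tfrac14(m-2^\alpha)
  \;>\; m+2^\alpha+\tfrac14(m-2^\alpha)
  \;>\; \max\!\bigl(F_{\mathrm{id}}(x,y),F_{\text{a-id}}(x,y)\bigr).
\end{equation*}
As $n=m+2>2+2^\alpha$ this yields the asserted configuration, and letting $m\to\infty$ makes the gap $\approx m-2^\alpha$ unbounded. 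The step I expect to be the real work is not the limit but the exact pair-counting for $\mathrm{id}$ and $\text{a-id}$: one must track precisely which outlier is pushed into the cluster and which single cluster point is sent across the far gap, because an off-by-one there changes the constant $2^\alpha$ and hence the sharp threshold $n>2+2^\alpha$.
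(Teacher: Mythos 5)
Your proposal is correct, and it is essentially the paper's own argument with the roles of $x$ and $y$ swapped: a cluster-plus-outliers configuration, evaluation in the degenerate limit $\veps \to 0$ where a cyclic-type shift trades the single outlier--outlier product worth $2^\alpha$ for $2(n-2)$ unit-sized cluster--outlier products, yielding the identical gap $(n-2)-2^\alpha$ and threshold $n > 2+2^\alpha$, followed by continuity in $\veps$. The only cosmetic differences are that the paper makes $x$ antisymmetric so that $F_{\mathrm{id}} = F_{\text{a-id}}$ holds exactly rather than computing both limits separately, and your continuity step is stated with an explicit $\tfrac14(m-2^\alpha)$ tolerance where the paper argues qualitatively.
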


\begin{proof} Assuming that the maximizer is always given by the identity od anti-identity,
we prove the assertion by a counterexample.
  For given $n>3$ and $\alpha > 0$,
  we construct an explicite instance
  by studying
  $x(\epsilon) = (x_i)_{i=1}^n$ and
  $y(\epsilon) = (y_i)_{i=1}^n$
  with $\epsilon \in (0, 2/(n-3))$
  given by
  \begin{equation*}
    x_i \coloneqq
    \begin{cases}
      -1,
      & i = 1,
      \\
      \tfrac{2i-n-1}{2} \, \epsilon,
      & i = 2, \dots, n-1,
      \\
      1,
      & i = n
    \end{cases}
    \qquad\text{and}\qquad
    y_i \coloneqq
    \begin{cases}
      -1,
      & i = 1,
      \\
      -1 + \epsilon,
      & i = 2,
      \\
      (i - 2) \, \epsilon,
      & i=3, \dots, n.
    \end{cases}
  \end{equation*}
  Due to the antisymmetry $x_i = - x_{n-i+1}$,
  we have
  $F_{\mathrm{id}}(x(\epsilon), y(\epsilon))
  = F_{\text{a-id}}(x(\epsilon), y(\epsilon))$.
	Let $f_\sigma(\epsilon) \coloneqq F_{\sigma}(x(\epsilon), y(\epsilon))$, $\sigma \in S_n$.
  Then, considering the summands with $i,k=1,2,n$ in \eqref{eq:quad-ass} separately,   we obtain
  \begin{align*}
    f_{\mathrm{id}}(\epsilon)
     &=
      \epsilon^{2 \alpha}
      \sum_{\substack{i,k = 3\\i < k}}^{n-1}
      | i- k |^{2\alpha}
      +
      \epsilon^\alpha
      \sum_{i=3}^{n-1}
      \bigl| \tfrac{2i-n-1}2 \, \epsilon - 1 \bigr|^\alpha \,
      |i-n|^\alpha
		+
      \epsilon^\alpha \,
      \bigl| \tfrac{3-n}2 \, + 1 \bigr|^\alpha
			\\
    &\qquad+
		\epsilon^\alpha
      \sum_{k=3}^{n-1}
      | 2 - k |^\alpha \,
      | (k-3) \, \epsilon + 1  |^\alpha
			+
      \sum_{k=3}^{n-1}
      \bigl| \tfrac{2k-n-1}2 \, \epsilon + 1 \bigr|^\alpha \,
      |(k-2) \, \epsilon  + 1 |^\alpha
          \\
    &\qquad+
      2^\alpha \, | (n-2) \, \epsilon + 1 |^\alpha
      +
      \bigl|\tfrac{3-n}2 \, \epsilon - 1 \bigr|^\alpha \,
      | (n - 3) \, \epsilon  + 1|^\alpha.      
  \end{align*}
  Next, we consider the cyclic permutation $\sigma = \mathrm{cyc}$ given by
  \begin{equation*}
    \mathrm{cyc}(i) \coloneqq
    \begin{cases}
      i + 1,
      & i=1,\dots, n-1,
      \\
      1,
      & i = n,
    \end{cases}
		\end{equation*}
  which gives
		\begin{equation*}
    y_{\mathrm{cyc}(i)} \coloneqq
    \begin{cases}
      -1 + \epsilon,
      & i = 1,
      \\
      (i - 1) \, \epsilon,
      & i=2, \dots, n-1,
      \\
      -1,
      & i = n.
    \end{cases}
  \end{equation*}
  Individual consideration of the summands $i,k = 1,n$ in \eqref{eq:quad-ass} yields 
  \begin{align*}
    f_{\mathrm{cyc}}(\epsilon)
           &=
      \epsilon^{2\alpha}
      \sum_{\substack{i,k = 2\\i < k}}^{n-1}
      | i-k |^{2\alpha}
      + 
			2^\alpha \, \epsilon^\alpha
			+
      \sum_{k=2}^{n-1}
      \bigl| \tfrac{2k -n -1}2 \, \epsilon + 1 \bigr|^\alpha \,
      |(k-2) \, \epsilon + 1|^\alpha
    \\
           &\qquad+ 
             \sum_{i=2}^{n-1}
             \bigl|\tfrac{2i-n-1}2 \, \epsilon - 1 \bigr|^\alpha \,
             |(i-1) \, \epsilon + 1|^\alpha.
  \end{align*}
  Evaluating both functions for the degenerate case $\epsilon = 0$,
  we get 
  $f_{\mathrm{id}}(0) = 2^\alpha + (n-2) $ 
  and
  $ f_{\mathrm{cyc}}(0) =     2(n-2)
  $, so that
  \begin{equation*}
    f_{\mathrm{cyc}}(0) - f_{\mathrm{id}}(0)
    =
    (n-2) - 2^\alpha  > 0 \quad \text{if }  n > 2+ 2^\alpha.
  \end{equation*}
  Due to the continuity of $f_{\mathrm{id}}$ and $f_{\mathrm{cyc}}$ in $\epsilon$,
  there exists an $\epsilon > 0$ such that
  $f_{\mathrm{id}}(\epsilon) < f_{\mathrm{cyc}}(\epsilon)$ and the difference can become arbitrary large for increasing $n$.
  Although the cyclic permutation may be no maximizer by itself,
  we obtain the assertion.
\end{proof}

Our counterexample consists of an artificial point arrangement.
Numerical study suggests that
the maximizer is often given by either $\mathrm{id}$ or $\text{a-id}$.
How high the probability in fact is, remains open for future research.

% {\color{blue}
% \begin{remark} (Only for authors of \cite{VFC+2019,VFC+2020})
%   Proposition~\ref{prop:1} for $\alpha = 2$
%   contradicts \cite[Thm~3.1]{VFC+2019}
%   stating that
%   the maximum in \eqref{eq:quad-ass} is achieved by
%   either $\mathrm{id}$ or $\text{a-id}$.
%   A careful analysis of the proof in \cite{VFC+2019} revealed that
%   the (local) constant $C$ in the rearrangement of the (local) objective $Z$ depends on
%   the auxiliary shift of $x$ by $b$.
%   Since $b$ is chosen differently for every permutation $\sigma$,
%   the constant $C$ is in fact a function $\sigma \mapsto C(\sigma)$.
%   This additional function is, however, missing
%   in the last rearrangement of \cite[(6)]{VFC+2019}
%   leading to a different optimization problem;
%   so the well-established rearrangement inequality cannot be applied.
%   In the alternative proof in \cite{VFC+2020},
%   we could not establish the auxiliary lemmata~7.2 and 7.3.
%   In the proof of the former,
%   some terms are missing in the derivation of the Hessian,
%   which here in fact is indefinite instead of positive semi-definite
%   as required for the argumentation.
%   In the proof of the latter,
%   the handling of the non-symmetric system matrix
%   leads to an error in the final quadratic program
%   such that again the rearrangement inequality cannot be applied.
% \end{remark}}

\bibliographystyle{abbrv}
\bibliography{references}

\begin{thebibliography}{1}

\bibitem{Mem2011}
F.~M{\'e}moli.
\newblock {G}romov--{W}asserstein distances and the metric approach to object
  matching.
\newblock {\em Found. Comput. Math.}, 11(4):417--487, 2011.

\bibitem{NDJBS2021}
K.~Nadjahi, A.~Durmus, P.~E. Jacob, R.~Badeau, and U.~Simsekli.
\newblock Fast approximation of the sliced-wasserstein distance using
  concentration of random projections.
\newblock In M.~Ranzato, A.~Beygelzimer, Y.~Dauphin, P.~Liang, and J.~W.
  Vaughan, editors, {\em Advances in Neural Information Processing Systems},
  volume~34, pages 12411--12424. Curran Associates, Inc., 2021.

\bibitem{VFC+2019}
T.~Vayer, R.~Flamary, N.~Courty, R.~Tavenard, and L.~Chapel.
\newblock Sliced {G}romov-{W}asserstein.
\newblock In H.~Wallach, H.~Larochelle, A.~Beygelzimer, F.~Alch\'e-Buc, E.~Fox,
  and R.~Garnett, editors, {\em Advances in Neural Information Processing
  Systems}, volume~32. Curran Associates, Inc., 2019.

\end{thebibliography}

\end{document}